\newcommand\bes{\begin{eqnarray}}
\newcommand\ees{\end{eqnarray}}
\newtheorem{theorem}{Theorem}[section]
\numberwithin{equation}{section}
\begin{document}
\author[Wang]{Wang Shuang-Ming $^{1}$
\\
 \textit{$^{1}$
 School of  Finance, Lanzhou University of Finance}\\
 \textit{and Economics, Lanzhou, Gansu 730020, People's Republic of China }\\
}
\title[the minimal wave speed of diffusive kermack-mckendrick model]
{\textbf{the minimal wave speed of time-periodic traveling waves arising form a 
diffusive kermack-mckendrick model with seasonality and nonlocal delayed interactions} }
\date{}
\maketitle
\begin{abstract}
This paper is concerned with the non-existence of time-periodic traveling wave solution with speed less than the critical speed for diffusive Kermack-McKendrick epidemic model incorporating
seasonality and nonlocal interactions induced by latent period.
By a technical construction of upper and lower solutions on truncated intervals for an auxiliary linear equation, we overcome the challenges arising from the coupling of nonlocal delay and the fact that the system is non-autonomous.
Thus the critical value $c^*$ defined in \cite{i} is confirmed as the minimal wave speed of time-periodic traveling waves.
We have completely solved the open problem \cite{i}.
\\ \textbf{Keywords}:
 Kermack-Mckendrick model,~nonlocal interactions,~ time-periodic traveling waves,~ the minimal wave speed\\
\textbf{AMS Subject Classification (2000)}: 35K57, 35B35,  35B40, 92D30, 93B60
\end{abstract}
\section{Introduction }
\noindent
In this paper, we are concerned with the following time-periodic and diffusive Kermack-Mckendrick model with nonlocal delay:
\begin{equation}\label{RWA}
\begin{cases}
\begin{aligned}
{\partial_t}S(t,x)&=d_1\partial_{xx} S(t,x)- \beta(t)S(t,x)I(t,x),\\
{\partial_t}I(t,x)&= d_2\partial_{xx} I(t,x) \\&
+
\int_\mathbb{R}\Gamma(t,t-\tau;x-y)\beta(t-\tau)S(t-\tau,y)I(t-\tau,y){\rm d}y-\gamma(t)I(t,x),
\end{aligned}
\end{cases}
~(t,x)\in \mathbb{R}^2.
\end{equation}
Model \eqref{RWA} was derived in \cite{i}, wherein $S(t,x)$ and $I(t,x)$ present the densities of susceptible class and infectious class  in time $t$ and location $x$, respectively.
Constant $T>0$ is the periodicity of environmental variation over time and $\tau>0$ is the incubation period of the pathogen.
Positive constants $d_1$ and $d_2$  denote the diffusion rates of susceptible class and infectious class, respectively.
$T$-periodic, positive and continuous functions $\beta$ and $\gamma$ are the infectious coefficient and the removed rate, respectively.
The kernel function 
$$\Gamma(t,t-\tau;y)=e^{-\int_{t-\tau}^{t}\gamma_L(s){\rm d}s}\frac{1}{ \sqrt{4\pi d_L\tau}} e^{-\frac{y^{2}}{4 d_L\tau}},~y\in \mathbb{R}$$
reveals the nonlocal interactions derived by mobility of individuals during latent period of disease, where $\gamma_L(t)$ is also a continuous and $T$-periodic function, $d_L$ is a positive constant.
Then $\Gamma$ is $T$-periodic with respect to the first two independent variables.
For the sake of simplicity, let $\mathcal{G}(t,t-\tau;y)=\frac{1}{ \sqrt{4\pi d_L\tau}} e^{-\frac{z^{2}}{4 d_L\tau}}$,
then we have
$\Gamma(t,t-\tau;y)=e^{-\int_{t-\tau}^{t}\gamma_L(s){\rm d}s}\mathcal{G}(t,t-\tau;y)$.
\noindent

Wang et al. \cite{i} recently studied the existence of time-periodic traveling wave solutions connecting two disease-free states of system \eqref{RWA}.
We recall that the time-periodic traveling wave solution of \eqref{RWA}
is a special solution with the form $(S(t,x),I(t,x)):=(\phi(t,x+ct),\psi(t,x+ct))$, wherein $(\phi,\psi)$
is $T$-periodic with respect to the first independent variable.
Hence, $(\phi,\psi)$ satisfies
\begin{equation}\label{trave}
\begin{cases}
\partial_{t}\phi(t,z)=d_1\partial_{zz}\phi(t,z)- c\partial_{z}\phi(t,z)-\beta(t)\phi(t,z)\psi(t,z),\\
\partial_{t}\psi(t,z)= d_2\partial_{zz}\psi(t,z)- c\partial_{z}\psi(t,z)\\~~~~~
+\int_\mathbb{R}\Gamma(t,t-\tau;y)\beta(t-\tau)
\phi(t-\tau,z-c\tau-y)\psi(t-\tau,z-c\tau-y){\rm d}y-\gamma(t)\psi(t,z)
\end{cases}
\end{equation}
and
\begin{equation}\label{Tp}
 (\phi(t+T,z),\psi(t+T,z))=(\phi(t,z),\psi(t,z))
\end{equation}
for all $(t,z)\in \mathbb{R}^2$.
We note that $c>0$ is the wave speed, $z=x+ct$ is the moving coordinate.
Additionally, to model  the disease propagation governed by \eqref{RWA}, we impose the following asymptotic boundary conditions on $(\phi,\psi)$:
  \begin{equation}\label{BC}
  \phi(t,-\infty)=S_0,\, \phi(t,\infty)=S_{\infty},\, \psi(t,\pm\infty)=0 ~\text{uniformly for all}~t\in \mathbb{R},
\end{equation}
wherein $S_0>0$ is the initial density of susceptible class, while $S_\infty>0$ is the density of susceptible class after the epidemic.
In fact, $(S_0,0)$ and $(S_\infty,0)$ are two equilibriums of kinetic system
\begin{equation}\label{ODE}
\begin{cases}
 S'(t)=- \beta(t)S(t)I(t),\\
I'(t)=
e^{-\int_{t-\tau}^{t}\gamma_L(s){\rm d}s}\beta(t-\tau)S(t-\tau)I(t-\tau)-\gamma(t)I(t).
\end{cases}
\end{equation}
Inspired by \cite{Ruan07}, we see that such time-periodic traveling waves for epidemic models implies a moving zone of transition from an infective state to a disease-free state in time-periodic environment.
 It also establishes a rigorous mathematical basis for studying the wave-like propagation of diseases in seasonally modulated landscapes.
\noindent

In actuality, time-periodic traveling wave solutions for periodic  SI epidemic systems have been widely studied in recent years.
Zhang et al. \cite{z-KM,DengD22} investigated  the existence of time-periodic traveling waves of Kermack-Mckendrick model with time-periodic coefficients.
The pivotal methodology involves  a novel two-step utilization of fixed-point theorems.
Such idea has also served as the foundational framework for addressing analogous issues in  further studies \cite{CPAA,ZhaoL,Z,SAM}.
Relevant results on high-dimensional systems can also be found in \cite{SAM,YXY,ZhangWu,D21}, wherein \cite{ZhangWu} concurrently accounts for discrete time delay.
Regardless of either high-dimensional models or delayed models, the critical wave speed is implicitly defined rather than explicitly formulated.
In both cases, the nonexistence of time-periodic traveling waves with wave speeds below the minimum wave speed is typically established by utilizing the asymptotic spreading speed properties\cite{SAM,ZhangWu}.
\noindent

To reveal the spatiotemporal propagation phenomena caused by the alternation of seasons, the authors\cite{i}  defined a critical value $c^*$ and proved the existence of time-periodic traveling waves of  \eqref{RWA}  with any given speed $c>c^*$. 
Nevertheless, \cite{i} did not prove that \eqref{RWA} admits no time-periodic traveling waves with
speed $c\in (0,c^*)$.
In other words, \cite{i} did not confirm the critical value $c^*$ is the minimal wave speed of time-periodic traveling waves.
In addition, \cite{i} shows the impacts of strength of seasonal forcing on $c^*$ and
obtained some new meaningful epidemiological insights through numerical simulations.
It is essential to note that these conclusions hold only if  $c^*$ coincides with the minimal wave speed.
\noindent

However, it is quite challenging to prove the non-existence of time-periodic traveling waves with speed $c\in (0,c^*)$ because of the  fact that system is non-autonomous and the introduce of nonlocal kernel function.
Firstly, the Laplace transform, which is applicable to autonomous systems(including time-delayed cases), fails for non-autonomous systems.
Secondly, the asymptotic spreading speed results, as employed in \cite{Z,SAM,ZhangWu,Z25JDE}, also fail to hold for system \eqref{RWA} due to the fundamental differences in the population dynamical structures.
It is worth noting that Ducrot et.al\cite{Ducrot2011} introduced a comparison approach
via the construction of a linearized equation with small perturbation on a sufficiently long truncated interval.
Soon afterwards, the development of time-periodic solutions to such linearized equation allowed for the extension of this methodology to time-periodic systems\cite{z-KM,CPAA}.
However, when nonlocal delay is incorporated,  the presence of $z-c\tau-y$  in \eqref{trave} prevents the formation of truncated interval and time-periodic solution for comparison purpose.
It should also be noted that spread problems involving nonlocal delays are inherently more difficult than its discrete counterpart\cite{ZhangWu}.
\noindent

In this paper, we intend to show the non-existence of time-periodic traveling waves of \eqref{BC} with speed $c\in (0,c^*)$ and boundary condition \eqref{BC}, and  thereby  confirm $c^*$ is the minimal wave speed of time-periodic traveling waves.
To address aforementioned challenges, we introduce an auxiliary linear equation with a significant perturbation and then construct upper and lower solutions carefully on truncated intervals for such auxiliary equation.
The intended objective is thereby achieved by deriving a contradiction through the comparison of auxiliary equation.
The remaining part of this paper provides the main result and its proof.
\section{The nonexistence of periodic traveling waves}
\noindent

We first recall the definition of  the basic reproduction number $R_0$ (\cite{i}).
Linearizing the second equation of  \eqref{ODE} at the disease-free steady state $(S _0,0)$  and applying $\varepsilon$-perturbation to it, we have
\begin{equation}\label{Linear}
 I'(t)=  e^{-\int_{t-\tau}^{t}\gamma_L(s){\rm d}s}\beta(t-\tau)(S_0-\varepsilon)I(t-\tau)-\gamma(t)I(t).
\end{equation}
We can define the basic reproduction number of linear equation \eqref{Linear} and denote it by $R_0^\varepsilon$(\cite{zhao17}).
In addition, we observe that $\lim_{\varepsilon\rightarrow0} R_0^\varepsilon=R_0$ and $\lim_{\varepsilon\rightarrow S_0} R_0^\varepsilon=0$.
Then $\varepsilon_{sup}:=\sup\{ \varepsilon\in(0,S_0):~R_0^\varepsilon>1\}$ is well defined provided $R_0>1$.
\noindent
We now present and prove the main result of this study.
\begin{theorem}\label{mt}
Assume that $R_0>1$. For all $c\in(0,c^{*})$,
system \eqref{RWA} admits no time-periodic traveling wave solution with wave speed $c$ that satisfies
asymptotic boundary condition \eqref{BC}.
\end{theorem}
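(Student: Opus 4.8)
The plan is to argue by contradiction. Suppose \eqref{RWA} admits a time-periodic traveling wave $(\phi,\psi)$ with speed $c\in(0,c^*)$ obeying \eqref{BC}. First I would record the standard preliminaries: by parabolic regularity $\phi,\psi$ are smooth with bounded derivatives, $0\le\phi\le S_0$, and since $\psi\ge 0$ is not identically $0$ the strong maximum principle gives $\psi>0$ on $\mathbb{R}^2$; being $T$-periodic in $t$, $\psi$ is also bounded. The whole argument then concentrates near $z=-\infty$, the edge of the wave where $\phi\approx S_0$ and $\psi\approx 0$: the goal is to show that there $\psi$ is trapped \emph{above} a subsolution of an auxiliary \emph{linear} $T$-periodic equation whose intrinsic growth rate is strictly positive because $c$ is subcritical, which contradicts the boundedness of $\psi$.

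Next I would build the auxiliary equation. Since $R_0>1$, $\varepsilon_{sup}>0$; for $\varepsilon\in(0,\varepsilon_{sup})$ let $c^*_\varepsilon$ be the critical speed attached, as in \cite{i}, to \eqref{Linear} with $S_0$ replaced by $S_0-\varepsilon$, so that $c^*_\varepsilon\to c^*$ as $\varepsilon\to 0$ and one may fix $\varepsilon$ with $c<c^*_\varepsilon$. Because the nonlocal integral in \eqref{trave} samples $\psi$ over all of $\mathbb{R}$, I would then truncate the Gaussian to a window $|y|\le M$; the associated critical speed $c^*_{\varepsilon,M}$ tends to $c^*_\varepsilon$ as $M\to\infty$, so fix $M$ with $c<c^*_{\varepsilon,M}$. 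From \eqref{BC} there is $Z_0>0$ with $\phi(t,z)\ge S_0-\varepsilon$ for all $t\in\mathbb{R}$ and $z\le-Z_0$. With $Z_1:=Z_0+M+c\tau$, for $z\le-Z_1$ and $|y|\le M$ one has $z-c\tau-y\le-Z_0$; hence, discarding the nonnegative tails of the integral in \eqref{trave} and using $\phi\ge S_0-\varepsilon$ at the remaining points, $\psi$ is, on the half-line $\{z\le-Z_1\}$, a supersolution of the auxiliary linear, nonlocal, $T$-periodic equation
\begin{equation}\label{aux}
\partial_t w= d_2\partial_{zz}w- c\partial_z w+(S_0-\varepsilon)\int_{-M}^{M}\Gamma(t,t-\tau;y)\beta(t-\tau)\,w(t-\tau,z-c\tau-y)\,{\rm d}y-\gamma(t)w,
\end{equation}
where, below, $w$ is extended by zero outside any bounded interval on which it is prescribed.

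Then I would carry out a truncated-interval comparison in the spirit of \cite{Ducrot2011,z-KM}. For $R>0$ let $I_R:=(-Z_1-2R,-Z_1)$ and solve the principal eigenvalue problem for \eqref{aux} on $I_R$ with zero lateral data, the nonlocal term being formed from the zero-extension $\hat\varphi_R$ of the eigenfunction: this produces $\lambda_R\in\mathbb{R}$ and $\varphi_R>0$, $T$-periodic in $t$, vanishing on $\mathbb{R}\times\partial I_R$, via Krein--Rutman applied to the positive compact period map of this cooperative operator. Since the truncated kernel is compactly supported, domain monotonicity yields that $\lambda_\infty:=\lim_{R\to\infty}\lambda_R$ exists, and the exponential-mode characterization of $c^*_{\varepsilon,M}$ — substitute $w=e^{-\mu z}\rho(t)$, reduce to a scalar $T$-periodic delay equation for $\rho$, read off its principal Floquet exponent, and minimize over $\mu$ — shows $\lambda_\infty>0$ precisely because $c<c^*_{\varepsilon,M}$. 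Fix $R$ with $\lambda_R>0$. Finally, fix $t_0\in\mathbb{R}$; on the compact set $\overline{I_R}\times[t_0-\tau,t_0]$ we have $\psi\ge m_R>0$ and $\varphi_R\le K_R$, so with $\eta:=\frac{m_R}{2K_R}e^{-\lambda_R t_0}$ the function $\underline w(t,z):=\eta e^{\lambda_R t}\varphi_R(t,z)$, extended by zero, solves \eqref{aux} on $I_R\times(t_0,\infty)$ (the eigenvalue relation absorbs the factors $e^{\lambda_R t}$ and $e^{-\lambda_R\tau}$) and lies below $\psi$ on the parabolic boundary — the lateral boundary $\partial I_R\times[t_0,\infty)$ and the initial time-slab $\overline{I_R}\times[t_0-\tau,t_0]$. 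Since $\psi$ is a supersolution of \eqref{aux} on $I_R$ and the integrand of the nonlocal term applied to $\psi-\underline w$ is nonnegative wherever $\underline w\equiv 0$, the comparison principle for \eqref{aux}, iterated over steps of length $\tau$, gives $\psi(t,z)\ge\eta e^{\lambda_R t}\varphi_R(t,z)$ for all $t\ge t_0$, $z\in I_R$. Letting $t\to+\infty$ at an interior point of $I_R$ forces the right-hand side to $+\infty$ while $\psi$ stays bounded — the required contradiction, which proves Theorem \ref{mt}.

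I expect the eigenvalue step to be the main obstacle. The presence of $z-c\tau-y$ in \eqref{trave} is exactly what prevents a clean truncated-interval construction: one must first cut the kernel, and then verify both that (i) truncation perturbs the critical speed by as little as desired and (ii) the Dirichlet-type principal eigenvalue $\lambda_R$ on the long interval $I_R$ converges to a limit that is strictly positive if and only if $c<c^*$. Equivalently, the delicate part is to reconcile the exponential-mode (dispersion-relation) description of $c^*$ — the natural one for this non-autonomous, nonlocally delayed linearization — with the bounded-interval comparison functions on which the Ducrot-type argument relies; ensuring that the zero-extension of $\varphi_R$ interacts correctly with the nonlocal term, both in the eigenvalue convergence and in the final comparison, is where the care is concentrated.
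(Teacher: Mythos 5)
Your overall scheme is the same as the paper's: argue by contradiction, truncate the Gaussian kernel, lower $\phi$ to $S_0-\varepsilon$ near $z=-\infty$ using \eqref{BC}, compare $\psi$ on a long bounded interval with a solution of an auxiliary linear time-periodic problem that grows exponentially in time because $c$ is subcritical, and contradict boundedness of $\psi$. The genuine gap is in the spectral step that your argument hinges on. First, the existence of a principal Floquet pair $(\lambda_R,\varphi_R)$ for the Dirichlet-truncated, time-periodic problem with a nonlocal \emph{delayed} term that samples points outside $I_R$ (zero extension) is only asserted; compactness and strong positivity of a suitable iterate of the period map on $C([-\tau,0]\times\overline{I_R})$ need verification, though this part is comparatively routine. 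The serious gap is the claim that $\lambda_\infty:=\lim_{R\to\infty}\lambda_R>0$ ``precisely because $c<c^*_{\varepsilon,M}$.'' You give no proof of this, and no off-the-shelf theorem is available for this class of operators: the linearization is non-self-adjoint (drift $c\partial_z$), time-periodic, and carries a nonlocal delay with the shifted argument $z-c\tau-y$, which is exactly the feature the paper identifies as blocking the standard truncated-interval comparison. The known route to such a lower bound on $\lambda_R$ is to exhibit, for the whole-line linearization, a compactly supported (in $z$), time-growing subsolution — but constructing that object is precisely the technical content of the paper, so as written your proposal defers rather than resolves the main difficulty; your closing paragraph acknowledges the obstacle but the dispersion-relation characterization of $c^*_{\varepsilon,M}$ you sketch does not by itself transfer to the Dirichlet eigenvalue on $I_R$.

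For comparison, the paper resolves this point by a different device: its auxiliary equation \eqref{CLE} carries \emph{no} spatial shift in the delayed term (it uses $U(t-\tau,z)$, not $U(t-\tau,z-c\tau-y)$), so the separated function $w(t,z)=e^{\rho t}e^{\mu_c(z-ct)}K(t)\sin\!\big(-\tfrac{\pi(z+r)}{l}\big)$ — with $\mu_c=\tfrac{c}{2d_2}$, $K$ a positive $mT$-periodic solution of a scalar delay ODE in the spirit of \cite{XuDS}, $\rho\in(c\mu_c,m\lambda_c)$, and $l$ so large that $\tfrac{d_2\pi^2}{l^2}<m\lambda_c-\rho$ — is an explicit subsolution on the bounded interval, which is exactly the quantitative ingredient your $\lambda_R>0$ claim is missing. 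The price of dropping the shift, namely the mismatch between $\psi(t-\tau,z)$ and $\psi(t-\tau,z-c\tau-y)$, is then paid on the supersolution side: instead of $\psi$ itself the paper compares $W=u_{\varepsilon_*}+\psi$, where $u_{\varepsilon_*}$ is the positive periodic solution of a saturated delay ODE with $\max u_{\varepsilon_*}<A-1$, and the factor $A=(1-\varrho)^{-1}$ coming from the kernel truncation \eqref{BiaoJi} absorbs the discrepancy for $z$ in the truncated interval. To complete your proof you must either prove the large-domain positivity of $\lambda_R$ (which in practice means building a test function of the same type as the paper's) or adopt a shift-removal/perturbation device of this kind; in your current form the step ``$\lambda_\infty>0$ iff $c<c^*$'' is the missing idea, not a detail.
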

\begin{proof}
Suppose, by contradiction, that there exists some  $c\in(0,c^{*})$  such \eqref{trave} have a solution $(\phi(t,x+ct),\psi(t,x+ct))$  satisfying \eqref{Tp} and \eqref{BC}.
We begin by identifying an $\ell>0$  such that
\begin{equation}\label{BiaoJi}
\int_{-\ell}^{\ell} \frac{1}{ \sqrt{4\pi d_L\tau}} e^{-\frac{y^{2}}{4 d_L\tau}}{\rm d}y=1-\varrho:=\frac{1}{A}.
\end{equation}
By virtue of the continuous dependence of the solutions and the basic reproduction number on the coefficients, we can
choose an $\varepsilon_*\in(0,\varepsilon_{sup})$ such that $R_0^{\varepsilon_*}>1$ and
\begin{equation}\notag
u'(t)=e^{-\int_{t-\tau}^{t}\gamma_L(s){\rm d}s}\beta(t-\tau)(S_0-\varepsilon_*)\frac{u(t-\tau)}{1+u(t-\tau)}-\gamma(t)u(t)
\end{equation}
possesses a unique positive and $T$-periodic solution $u_{\varepsilon_*}$ satisfying $\max_{t\in[0,T]}u_{\varepsilon_*}(t)< A-1$.
\noindent

Constructing the following equation
\begin{equation}\label{CLE}
\begin{aligned}
&\partial_t U(t,z)-d_2\partial_{zz}U(t,z) +c\partial_z U(t,z)\\
&-e^{-\int_{t-\tau}^{t}\gamma_L(s){\rm d}s}\int_{-\ell}^{\ell}\mathcal{G}(t,t-\tau;y)\beta(t-\tau)(S_0-\varepsilon_*)
U(t-\tau,z){\rm d}y+\gamma(t) U(t,z)=0.
\end{aligned}
\end{equation}
Moreover, it follows from \eqref{BC} that there exits some $r>>\ell$  and $l>\ell^2$ 
such that
$$\phi(t-\tau,z-c\tau-y)>S_0-\varepsilon_*
$$
$$ u_{\varepsilon_*}(t-\tau)+\psi(t,z)
\leq
 \frac{A u_{\varepsilon_*}(t-\tau) }{1+u_{\varepsilon_*}(t-\tau)}
 +\psi(t,z-c\tau-y)$$
for all $y\in[-\ell,\ell]$ and $z\in[-r-l,-r]$.
Denote $W(t,z):=u_{\varepsilon_*}(t)+\psi(t,z)$,  we have
\begin{align}\notag
&\quad\quad\partial_t W(t,z)-d_2\partial_{zz}W(t,z) +c\partial_z W(t,z)\\  \notag
&-e^{-\int_{t-\tau}^{t}\gamma_L(s){\rm d}s}\int_{-\ell}^{\ell}\mathcal{G}(t,t-\tau;y)\beta(t-\tau)(S_0-\varepsilon_*)
W(t-\tau,z){\rm d}y+\gamma(t) W(t,z)\\  \notag
&=u_{\varepsilon_*}'(t)+\partial_t\psi(t,z)-d_2\partial_{zz}\psi(t,z) +c\partial_z\psi(t,z)\\ \notag
&-e^{-\int_{t-\tau}^{t}\gamma_L(s){\rm d}s}\!\!\int_{-\ell}^{\ell}\!\mathcal{G}(t,t-\tau;y)\beta(t-\tau)(S_0\!-\!\varepsilon_*)
[u_{\varepsilon_*}(t-\tau)+\psi(t-\tau,z)]{\rm d}y+\gamma(t) [u_{\varepsilon_*}(t)+\psi(t,z)]\\ \notag
&\geq  u_{\varepsilon_*}'(t)+\partial_t\psi(t,z)-d_2\partial_{zz}\psi(t,z) +c\partial_z\psi(t,z)\\ \notag
&\!-\!e^{-\int_{t\!-\!\tau}^{t}\!\gamma_L(s)
{\rm d}s}\!\!\int_{-\ell}^{\ell}\!\!\mathcal{G}\!(t,t\!-\!\tau;y)\beta(t\!-\!\tau)(S_0\!-\!\varepsilon_*)
\Big[\frac{A u_{\varepsilon_*}(t-\tau) }{1+u_{\varepsilon_*}(t-\tau)}   \!+\!\psi(t\!-\!\tau,z\!-c\tau\!-\!y)\Big]{\rm d}y\!+\!\gamma(t) [u_{\varepsilon_*}(t)\!+\!\psi(t,z)]\\ \notag
&=  u_{\varepsilon_*}'(t)-e^{-\int_{t-\tau}^{t}\gamma_L(s){\rm d}s}\beta(t-\tau)(S_0-\varepsilon_*)
\frac{ u_{\varepsilon_*}(t-\tau) }{1+u_{\varepsilon_*}(t-\tau)}
+\gamma(t)u_{\varepsilon_*}(t)\\    \notag &+\partial_t\psi(t,z)-d_2\partial_{zz}\psi(t,z) +c\partial_z\psi(t,z)\\ \notag &
-e^{-\int_{t-\tau}^{t}\gamma_L(s){\rm d}s}
\int_{-\ell}^{\ell}\mathcal{G}(t,t-\tau;y)\beta(t-\tau)(S_0-\varepsilon_*)
\psi(t-\tau,z-c\tau-y){\rm d}y
+\gamma(t) \psi(t,z)\\  \notag
&> u_{\varepsilon_*}'(t)-e^{-\int_{t-\tau}^{t}\gamma_L(s){\rm d}s}\beta(t-\tau)(S_0-\varepsilon_*) \frac{ u_{\varepsilon_*}(t-\tau) }{1+u_{\varepsilon_*}(t-\tau)
+\gamma(t)u_{\varepsilon_*}(t)}\\  \notag   &+\partial_t\psi(t,z)-d_2\partial_{zz}\psi(t,z) +c\partial_z\psi(t,z)\\ \notag &
-e^{-\int_{t-\tau}^{t}\gamma_L(s){\rm d}s}
\int_\mathbb{R}\mathcal{G}(t,t-\tau;y)\beta(t-\tau)\phi(t-\tau,z-c\tau-y)
\psi(t-\tau,z-c\tau-y){\rm d}y
+\gamma(t) \psi(t,z)\\ \label{UP}
&=0.
\end{align}
\noindent

Let
$\omega(x)=
\sin \big(-\frac{\pi (x+r)}{l}\big)$, $x\in[-r-l,-r]$.
It is easy to see that $\omega''(x)=-\frac{\pi^2}{l^2}\omega(x)$ and $\omega(x)=0$, $x=-r-l,-r$.
In addition, it follows from \eqref{BiaoJi} that
$$\int_{-\ell}^{\ell}\mathcal{G}(t,t-\tau;y)\omega(z){\rm d}y=(1-\varrho)\omega(z).$$
\noindent

Let $\mu_c:=\frac{c}{2d_2}$ and consider equation
\begin{equation}\label{Evpb}
\eta'(t)= [d_2\mu_c^2-\gamma(t)]\eta(t) +e^{-\int_{t-\tau}^{t}\gamma_L(s){\rm d}s}\beta(t-\tau)(S_0-\varepsilon_*)(1-\varrho)\eta(t-\tau).
\end{equation}
Denote $\lambda_c:=\frac{\ln \rho_{\varepsilon_*}(\mu_c)}{T}$, wherein $\rho_{\varepsilon_*}(\mu_c)$ is the spectral radius of Poincar\'{e}  map $P_{\varepsilon_*}$ generated by \eqref{Evpb}.
Let $m:=\min\{k\in\mathbb{N}_+:~k\lambda_c>c\mu_c\}$, which implies $c\mu_c<m\lambda_c$.
Using a method similar as the proof of \cite[Proposition 2.1]{XuDS}, we can obtain
\begin{equation}\label{Kt0}
K'(t)\!-\![d_2\mu_c\!-\!\gamma(t)]K(t)\!-e^{-\int_{t-\tau}^{t}\gamma_L(s){\rm d}s}\beta(t-\tau)(S_0-\varepsilon_*)(1\!-\!\varrho)e^{-m\lambda_c\tau}K(t-\tau)\\
=\!-m\lambda_c K(t)
\end{equation}
admits a unique $mT$-periodic and positive solution.
\noindent
Define $w(t,x):=e^{\rho t}e^{\mu_c(z-ct)}K(t)\omega(x)$, wherein $\rho\in (c\mu_c,m\lambda_c)$ is a fixed  constant.
Recalling that $\ell$ is independent of given $\varepsilon_*$, we can choose $\ell$ sufficiently large such that $\frac{d_2\pi^2}{l^2}<m\lambda_c-\rho$.
Then we have
\begin{align}\notag
&\quad\quad\partial_t w(t,z)-d_2\partial_{zz}w(t,z) +c\partial_z w(t,z)\\ \notag
&-e^{-\int_{t-\tau}^{t}\gamma_L(s){\rm d}s}\int_{-\ell}^{\ell}\mathcal{G}(t,t-\tau;y)\beta(t-\tau)(S_0-\varepsilon_*)
w(t-\tau,z){\rm d}y+\gamma(t) w(t,z)\\ \notag
&
=\rho w(t,z)-c\mu_cw(t,z)+e^{\rho t} e^{\mu_c (z-ct)}
\bigg\{K'(t) \omega(z)-d_2K(t)\big[\mu_c ^2 \omega(z)+ \omega''(z)+2\mu_c  \omega'(z)\big]
\\ \notag
&\!+\!cK\!(t)\big[\mu_c\omega(z)\!+\!\omega'(z)\big]\\ \notag
&\!-\!e^{-\int_{t-\tau}^{t}\gamma_L(s){\rm d}s+d_L\tau\mu^2}\beta(t-\tau)(S_0-\varepsilon_*)e^{-\rho\tau}K(t-\tau)\int_{-\ell}^{\ell}
\mathcal{G}(t,t-\tau;y)\omega(z){\rm d}y + \gamma(t)K\!(t)\omega(z)\!\bigg\}\\ \notag
&=
\rho w(t,z)-c\mu_cw(t,z)+e^{\rho t} e^{\mu_c (z-ct)}
\bigg\{K'(t) \omega(z)-d_2K(t)\big[\mu_c ^2 \omega(z)+ \omega''(z)+2\mu_c  \omega'(z)\big]
\\ \notag
&\!+\!cK\!(t)\big[\mu_c\omega(z)\!+\!\omega'(z)\big]\\ \notag
&\!-\!e^{-\int_{t-\tau}^{t}\gamma_L(s){\rm d}s}\beta(t-\tau)(S_0-\varepsilon_*)e^{-\rho\tau}K(t-\tau)
(1-\varrho)\omega(z)+\gamma(t)K\!(t)\omega(z)\!\bigg\}\\ \notag
&
=\rho w(t,z)+e^{\rho t} e^{\mu_c (z-ct)}
\bigg\{-d_2 \omega''(z)K(t)+K'(t) \omega(z)-d_2\mu_c ^2 \omega(z)K(t)\\ \notag
&\!-\!e^{-\int_{t-\tau}^{t}\gamma_L(s){\rm d}s}\beta(t-\tau)(S_0-\varepsilon_*)e^{-\rho\tau}K(t-\tau)
(1-\varrho)\omega(z)+\gamma(t)K\!(t)\omega(z)\!\bigg\}\\ \notag
&
\leq\rho w(t,z)+e^{\rho t} e^{\mu_c (z-ct)}
\bigg\{-d_2 \omega''(z)K(t)+K'(t) \omega(z)-d_2\mu_c ^2 \omega(z)K(t)\\ \notag
&\!-\!e^{-\int_{t-\tau}^{t}\gamma_L(s){\rm d}s}\beta(t-\tau)(S_0-\varepsilon_*)e^{-m\lambda_c\tau}K(t-\tau)
(1-\varrho)\omega(z)+\gamma(t)K\!(t)\omega(z)\!\bigg\}\\ \notag
&=\rho w(t,z)+e^{\rho t} e^{\mu_c (z-ct)}
\bigg\{-d_2 \omega''(z)K(t)+\big[K'(t) -d_2\mu_c ^2 K(t)\\ \notag
&\!-\!e^{-\int_{t-\tau}^{t}\gamma_L(s){\rm d}s}\beta(t-\tau)(S_0-\varepsilon_*)e^{-m\lambda_c\tau}(1-\varrho)K(t-\tau)
+\gamma(t)K\!(t)\big]\omega(z)\!\bigg\}\\ \notag
&=\rho w(t,z)+e^{\rho t} e^{\mu_c (z-ct)}
\big[-d_2 \omega''(z) - m\lambda_c \omega(z)\big]K(t)\\  \notag
&=\Big(\frac{d_2\pi^2}{l^2}+\rho-m\lambda_c\Big)w(t,z)\leq0   \notag
\end{align}
for all $t>0$ and $z\in[-r-l,-r]$.
Therefore, $w(t,z)$ is a lower solution of (\ref{CLE}).
In contrast, it follows from \eqref{UP} that $W(t,z)$ is a super solution of (\ref{CLE}).
As a consequence, there exist some $\varsigma(n)>0$ sufficiently small such that
$$W(s,z)\geq \varsigma w(s,z)=
\varsigma e^{(\rho-c\mu_c)s}K(t)e^{\mu_c z} \omega(z),
~~ \forall (s,z)\in [-\tau,0]\times[-r-l,-r].$$
On the other hand, we have
\begin{equation}\notag
W(t,z)\geq 0=\varsigma w(t,z)=\varsigma e^{(\lambda_c-c\mu_c) t}K(t)e^{\mu_c z} \omega(z), ~t>-\tau,  z=-r-l,-r.
\end{equation}
Then it follows from comparison principle that
\begin{equation}\notag
W(t,z)\geq \varsigma w(t,z)=\varsigma e^{(\lambda_c-c\mu_c) t}K(t)e^{\mu_c z} \omega(z)
\end{equation}
for all  $t>0$ and $z\in[-r-l,-r]$.
Since $\rho-c\mu_c>0$, we observe that $W(t,z)\rightarrow\infty$ for all $z\in(-r-l,-r)$ as $t\rightarrow\infty$, that is, $\psi(t,z)\rightarrow\infty$ for all $z\in(-r-l,-r)$ as $t\rightarrow\infty$.
This contradiction completes the proof.
\end{proof}
\section{Conclusions}
In this paper, we completely solved the open problem raised in\cite{i}.
Precisely, we prove the non-existence of time-periodic traveling waves with speed less that a critical value $c^*$ defined in \cite{i}.
Thus we confirm that  $c^*$ is exactly the minimal wave speed of time-periodic traveling waves of diffusive Kermack-McKendrick epidemic model \eqref{RWA}.
It is this conclusion that allows for a meaningful interpretation of the numerical findings presented in \eqref{RWA}.
Precisely, models that neglect the latent period  inherently fail to capture the influence of seasonal variations on transmission dynamics\cite{z-KM,CPAA,ZhaoL}.
Present study, complemented by the numerical results of \eqref{RWA}, serve to rectify this limitation.

Based on Ducrot et.al\cite{Ducrot2011} and more recent developments\cite{D21}, we propose a novel perturbation technique that fundamentally differs from existing  approaches.
These conventional methods rely on applying small perturbations to an auxiliary linear equation.
Owing to the presence of nonlocal time delays in current system, we note that the methodology developed in this work is expected to be applicable to similar  problems in time-periodic epidemic systems that account for incubation period.
It also appears to work for estimating the asymptotic spreading speed of solutions with localized initial introductions in time-periodic epidemic systems with incubation period.

\noindent{\bf Acknowledgments}~

S.-M. Wang was partially supported by the National Natural Science Foundation of China [12461034],
the Provincial Talent Program of Gansu Province of China[2025QNTD15]
and the Science and Technology Plan Foundation of Gansu Province of China [24JRRA1006].


\begin{thebibliography}{99}
\bibitem{i}S.-M. Wang,  Z. Feng,  Z.-C.  Wang, L. Zhang, Periodic traveling wave of a time periodic and diffusive epidemic model with nonlocal delayed transmission,
   \emph{Nonlinear Anal. Real World Appl.}, \textbf{55} (2020) 103117, 27 pp.
%
\bibitem{Ruan07} S. Ruan, Spatial-temporal dynamics in nonlocal epidemiological models,
Mathematics for life science and medicine, 97-122. Biol. Med. Phys. Biomed. Eng.
Springer, Berlin, 2007.
%
\bibitem{z-KM} L. Zhang, Z.-C. Wang,  X.-Q. Zhao, Time periodic traveling wave solutions for a Kermack-McKendrick epidemic model with diffusion and seasonality, {\it J. Evol. Equ.}, \textbf{20}(2020), 1029-1059.
%
\bibitem{DengD22}D. Deng, J. Wang, L. Zhang, Critical periodic traveling waves for a Kermack-McKendrick epidemic model with diffusion and seasonality. {\it J. Differential Equations},  \textbf{322}(2022), 365-395.
%
\bibitem{CPAA}S.-M. Wang, Z. Feng, Z.-C. Wang, L. Zhang,  Spreading speed and periodic traveling waves of a time periodic and diffusive si epidemic model with demographic structure, {\it Commun. Pure Appl. Anal.}, \textbf{21}(2022), 2005-2034.
%
\bibitem{ZhaoL} L. Zhao, Time-periodic traveling wave solutions of a reaction-diffusion Zika epidemic model with seasonality, {\it Z. Angew. Math. Phys.} \textbf{75}(2024). doi.org/10.1007/s00033-023-02173-9
%
\bibitem{Z}L. Zhang, Z.-C. Wang,  X.-Q. Zhao, Propagation dynamics of a time periodic and delayed reaction-diffusion model without quasi-monotonicity,
   {\it Trans. Amer. Math. Soc.}, \textbf{372}(2019), 1751-1782.
   %
\bibitem{SAM} S.-M. Wang, L. Zhang, Spatiotemporal propagation of a time-periodic reaction-diffusion SI epidemic model with treatment, {\it Stud. Appl. Math.},  \textbf{152}(2024), 342-375.
%
\bibitem{YXY}X. Yang, G. Lin,  Spreading speeds and traveling waves for a time periodic DS-IA epidemic model. \emph{Nonlinear Anal. Real World Appl.},  \textbf{66}(2022), 103515.
%
\bibitem{ZhangWu}X. Zhang,  S.-L. Wu,  L. Zou, et al. Spreading dynamics for a time-periodic nonlocal dispersal epidemic model with delay and vaccination. \emph{J. Math. Biol.},  \textbf{90}(2025), 
    doi.org/10.1007/s00285-025-02214-z
\bibitem{D21} A. Ducrot, T. Giletti, J.-S. Guo, M. Shimojo, Asymptotic spreading speeds for a predator-prey system with two predators and one prey, {\it Nonlinearity}, \textbf{34}(2021), 669-704.
%
\bibitem{Z25JDE}L. Zhang, Z.-C. Wang, Spatial propagation phenomena for diffusive SIS epidemic models, {\it J. Differential Equations}, \textbf{423} (2025), 240-285.
%
\bibitem{Ducrot2011}  A. Ducrot,  P. Magal, Travelling wave solutions for an infection-age structured
epidemic model with external supplies. \emph{Nonlinearity},  \textbf{24}(2011), 2891-2911.
%
\bibitem{zhao17} X.-Q. Zhao, Basic Reproduction ratios for periodic compartmental models with time delay. \emph{J. Dynam. Differential Equations},  \textbf{29}(2017), 67-82.
%
\bibitem{XuDS} D.-S. Xu,   X.-Q. Zhao, Dynamics in a periodic competitive model with stage structure, \emph{J. Math. Anal. Appl.}, \textbf{311}(2005), 417-438.
\end{thebibliography}
\end{document}